\renewcommand{\ALG@name}{\sffamily\footnotesize Algorithm}
\newtheorem{proposition}{Proposition}
\newtheorem{lemma}{Lemma}
\newtheorem{theorem}{Theorem}
\newtheorem{assumption}{Assumption}
\def\ddsm{dDSM\xspace}
\def\mads{MADS\xspace}
\def\poll{\textbf{Poll}\xspace}
\def\revpoll{\textbf{Revealing Poll}\xspace}
\def\search{\textbf{Search}\xspace}
\DeclareMathOperator*{\minimize}{minimize}
\DeclareMathOperator*{\argmin}{argmin}
\def\R{\mathbb{R}}
\def\N{\mathbb{N}}
\def\Z{\mathbb{Z}}
\def\S{\mathbb{S}}
\def\B{\mathcal{B}}
\def\P{\mathcal{P}}
\def\D{\mathcal{D}}
\def\ll{\llbracket}
\def\rr{\rrbracket}
\newcommand{\norm}[1]{\left\lVert#1\right\rVert}
\newcommand{\abs}[1]{\left\lvert{#1}\right\rvert}
\def\int{\mathrm{int}}
\def\cl{\mathrm{cl}}
\begin{document} 

\GDcoverpage

\begin{GDtitlepage}

\begin{GDauthlist}
	\GDauthitem{C. Audet \ref{affil:gerad}}
	\GDauthitem{P.-Y. Bouchet \ref{affil:gerad}}
	\GDauthitem{L. Bourdin \ref{affil:unilim}}
\end{GDauthlist}

\begin{GDaffillist}
	\GDaffilitem{affil:gerad}{GERAD, Montr\'eal (Qc), Canada, H3T 1J4}
	\GDaffilitem{affil:unilim}{XLIM Research Institute, UMR CNRS 7252, University of Limoges, France}
\end{GDaffillist}

\begin{GDemaillist}
	\GDemailitem{charles.audet@gerad.ca}
	\GDemailitem{pierre-yves.bouchet@polymtl.ca}
	\GDemailitem{loic.bourdin@unilim.fr}
\end{GDemaillist}

\end{GDtitlepage}

%% %%%%%%%%%%%%%%%%%%%%%%%%%%%%%%%%%%%%%%%%%%%%%%%%%%%%%%%
%% %%%%%%%%% Résumés, mots-clés, remerciements %%%%%%%%%%%
%% %%%%%%% Abstract, keywords, acknowledgements %%%%%%%%%%
%% %%%%%%%%%%%%%%%%%%%%%%%%%%%%%%%%%%%%%%%%%%%%%%%%%%%%%%%

\GDabstracts

\begin{GDabstract}{Abstract}
This note provides a counterexample to a theorem announced in the last part of the paper \textit{Analysis of direct searches for discontinuous functions}, Mathematical Programming Vol. 133, pp.~299--325, 2012.
The counterexample involves an objective function $f: \R \to \R$ which satisfies all the assumptions required by the theorem but contradicts some of its conclusions.
A corollary of this theorem is also affected by this counterexample.
The main flaw revealed by the counterexample is the possibility that a directional direct search method (\ddsm) generates a sequence of trial points~$(x_k)_k$ converging to a point $x_*$ where~$f$ is discontinuous and whose objective function value~$f(x_*)$ is strictly less than $\lim_{k\to\infty} f(x_k)$.
Moreover the \ddsm generates no trial point in one of the two branches of~$f$ near $x_*$.
This note also investigates the proof of the theorem to highlight the inexact statements in the original paper.
Finally this work concludes with a modification of the \ddsm that allows to recover the properties broken by the counterexample.

\paragraph{Keywords:} Discontinuous optimization, direct search methods, generalized derivatives.

\end{GDabstract}

%\begin{GDabstract}{R\'esum\'e}
%Cette note fournit un contre-exemple à un théorème proposé dans la dernière partie de l'article \textit{Analysis of direct searches for discontinuous functions}, Mathematical Programming Vol. 133, pp.~299--325, 2012.
%Le contre-exmple repose sur une fonction-objectif $f: \R \to \R$ qui satisfait toutes les hypothèses requires par le théorème mais contredit certaines de ses conclusions.
%Un corollaire au théorème est également affecté par le contre-exemple.
%Le principal problème révélé par le contre-exemple est la possibilité qu'une méthode de recherche directe (\ddsm) génère une suite d'optimiseurs $(x_k)_k$ convergeant vers un point $x_*$ en lequel $f$ est discontinue et telle que la valeur-objectif $f(x_*)$ est strictement inférieure à $\lim_{k\to\infty}f(x_k)$.
% De plus, la \ddsm ne génère aucun point dans l'une des deux branches de $f$ au voisinage de $x_*$.
%Cette note étudie également la preuve du théorème pour révéler des affirmations incorrectes dans l'article initial.
%Enfin, ce travail propose une modifiation de la \ddsm permettant d'obtenir les propriétés cassées par le contre-exemple.
%
%\paragraph{Mots cl\'es\,: } Optimisation discontinue, méthodes de recherche directe, dérivées généralisées.
%
%\end{GDabstract} 

\begin{GDacknowledgements}
Work of the first author is supported by NSERC Canada Discovery Grant 2020--04448.

\end{GDacknowledgements}

%% %%%%%%%%%%%%%%%%%%%%%%%%%%%%%%%%%%%%%%%%%%%%%%%%%
%% %%%%%%%%%%%%%%%% Article %%%%%%%%%%%%%%%%%%%%%%%%
%% %%%%%%%%%%%%%%%%%%%%%%%%%%%%%%%%%%%%%%%%%%%%%%%%%

\GDarticlestart
\newpage
%% %%%%%%%%%%%%%%%%%%%%%%%%%%%%%%%%%%%%%%%%%%%%%%%%%
\section{Introduction}
\label{sec:intro}
%% %%%%%%%%%%%%%%%%%%%%%%%%%%%%%%%%%%%%%%%%%%%%%%%%%

Derivative-free optimization is the mathematical field that focuses on nonsmooth optimization problems with lack of derivatives (see, \textit{e.g.}, \cite{AlAuGhKoLed2020,AuHa2017,CoScVibook}).
The generic problem is
\begin{equation*}
	\underset{x \in \R^n}{\minimize}~ f(x) \quad s.t. \quad x \in \Omega,
\end{equation*}
where $f:\R^n \to \R\cup\{+\infty\}$ is nonsmooth and the feasible region $\Omega$ is a nonempty subset of $\R^n$.
A class of derivative-free algorithms well studied in the literature is the so-called \textit{directional Direct Search Methods} (\ddsm).
When $f$ has bounded level sets, a \ddsm generates a so-called \textit{refined point}~$x_*$ and a set~$D$ of so-called \textit{refining unitary directions}.
In addition, when $f$ is locally Lipschitz-continuous at~$x_*$, it is known that the Clarke generalized derivative $f^\circ_C$ is nonnegative at~$x_*$ in each refining direction~$d \in D \cap H_\Omega(x_*)$, where $H_\Omega(x_*)$ stands for the hypertangent cone to $\Omega$ at~$x_*$.
Relaxing this last continuity assumption, and thus minimizing a possibly discontinuous objective function, is challenging.
The paper~\cite{ViCu2012} settles several results about the behavior of a \ddsm in that context.

The main results of this paper (precisely \cite[Theorem~3.1]{ViCu2012} and~\cite[Theorem~3.2]{ViCu2012}) show that, if a \ddsm generates a refining subsequence satisfying~$f(x_k) \to f(x_*)$, then the Rockafellar generalized derivative~$f^\circ_R$ (applicable to discontinuous functions, as opposed to the Clarke generalized derivative) is nonnegative at $x_*$ in all directions $d \in D \cap T_\Omega(x_*)$, where $T_\Omega(x_*)$ stands for the tangent cone to $\Omega$ at $x_*$.
These important results extend the previous analyses to discontinuous functions.

At the end of the paper, two additional results (precisely \cite[Theorem~4.1]{ViCu2012} and~\cite[Corollary~4.1]{ViCu2012}) attempt to derive sufficient conditions on the refining subsequence to guarantee that~$f(x_k) \to f(x_*)$.
Unfortunately, to the best of our knowledge, one of the statements from~\cite[Theorem~4.1]{ViCu2012} is incorrect, which also affects the validity of~\cite[Corollary~4.1]{ViCu2012}.
The present work focuses on this inexact statement.

A \ddsm iteratively repeats a two-steps process formalized in~\cite[Algorithm~2.1]{ViCu2012}.
First, an optional \search step evaluates some points anywhere in $\R^n$.
Second, a mandatory \poll step samples $f$ around the current incumbent solution $x_k$ according to some chosen step parameter $\alpha_k >0$ and set~$D_k$ of directions.
For a given sequence $(x_k;\alpha_k;D_k)_{k\in\N}$, a so-called \textit{refining subsequence} (indexed by~$K \subseteq \N$) to a \textit{refined point} $x_*$ and a set $D$ of \textit{refining unitary directions} mean that the three following conditions are satisfied: (i) each iteration indexed by~$k \in K$ is unsuccessful (that is, all \search and \poll trial points fail to dominate the incumbent solution); (ii) $(x_k)_{k \in K}$ converges to~$x_*$ and $(\alpha_k)_{k \in K}$ converges to~$0$; (iii) any direction in $D$ is the limit of a subsequence of refining normalized directions~$(\frac{d_k}{\norm{d_k}})_{k \in K_1}$ where~$0 \neq d_k \in D_k$ for all $k \in K_1 \subseteq K$.
Following this terminology,~\cite[Theorem~4.1]{ViCu2012} and~\cite[Corollary~4.1]{ViCu2012}, plus~\cite[Assumption~2.1]{ViCu2012} and~\cite[Assumption~4.1]{ViCu2012} they rely on, are stated as follows.

\begin{quote}\textbf{\cite[Assumption~2.1]{ViCu2012}}
	The level set $L(x_0) = \{x \in \Omega : f(x) \leq f(x_0)\}$ is bounded.
	The function $f$ is bounded below on $L(x_0)$.
\end{quote}

\begin{quote}\textbf{\cite[Assumption~4.1]{ViCu2012}}
	The function $f$ is such that there exists a neighborhood $B$ of $x_*$ (a limit point of a refining subsequence) which admits a finite partition $B = \cup_{i=1}^{n_B} B_i$ such that, for all $i \in \{1, \dots, n_B\}$,
	\begin{enumerate}
		\item $\int(B_i) \neq \emptyset$,
		\item $\cl(B_i)$ has the exterior cone property (see~\cite[Definition~4.1]{ViCu2012}),
		\item $f$ is Lipschitz-continuous in $\int(B_i)$ and can be continuously extended from $\int(B_i)$ to~$\partial B_i$.
	\end{enumerate}
\end{quote}

\begin{quote}\textbf{\cite[Theorem~4.1]{ViCu2012}}
	Consider a refining subsequence $(x_k)_{k \in K}$ converging to $x_* \in \Omega$ (and note that~\cite[Assumption~2.1]{ViCu2012} is required for the existence of such a subsequence).
	Assume that $f$ is lower semicontinuous at $x_*$ and satisfies~\cite[Assumption~4.1]{ViCu2012}.
	Let the sets of refining directions for $x_*$ corresponding to any infinite subsequence of $K$ be dense in the unit sphere.
	
	If $x_*$ belongs to the interior of a partition set in $\{B_1, \dots, B_{n_B}\}$, then $f^\circ_C(x_*;v) \geq 0$ for all refining directions $v \in T_\Omega(x_*)$ (assuming here also that $H_\Omega(x_*)$ is nonempty).
	
	Otherwise, there exists a subsequence $K' \subset K$ and a partition set $B' \in \{B_1, \dots, B_{n_B}\}$ such that (i) $(x_k)_{k \in K} \subset \cl(B')$, (ii) there is an infinite number of poll points, corresponding to iterates in $K$, in $\int(B')$, and (iii) there is an infinite number of poll points, corresponding to iterates in $K$, in $\R^n \setminus \cl(B')$.
\end{quote}

\begin{quote}\textbf{\cite[Corollary~4.1]{ViCu2012}}
	Under~\cite[Assumption~2.1]{ViCu2012} and the assumptions of~\cite[Theorem~4.1]{ViCu2012} and when $n_B = 2$, there exists a subsequence $K_* \subset K$ and a partition set $B_* \in \{B_1, B_2\}$ such that, when $x_*$ is in the border of the two partition sets,
	\begin{enumerate}
		\item $B_*$ satisfies the properties stated for $B'$ in~\cite[Theorem~4.1]{ViCu2012},
		\item $\lim_{k \in K_*} f(x_k) = f(x_*)$.
	\end{enumerate}
\end{quote}

The present note provides in Section~\ref{sec:counterexample} a counterexample to~\cite[Theorem~4.1]{ViCu2012} and~\cite[Corollary~4.1]{ViCu2012}. 
More precisely, Section~\ref{sec:counterexample/f} constructs an unconstrained optimization problem with a single-variable real-valued objective function $f : \R \to \R$, Section~\ref{sec:counterexample/mads} proposes a specific \ddsm instance, and Section~\ref{sec:counterexample/theorems_contradicted} shows that the counterexample satisfies all the assumptions required in~\cite[Theorem~4.1]{ViCu2012} and~\cite[Corollary~4.1]{ViCu2012} but contradicts some of their conclusions.
An analysis of the proofs of~\cite[Theorem~4.1]{ViCu2012} and~\cite[Corollary~4.1]{ViCu2012} is provided in Section~\ref{sec:problems_proofs} to identify the incorrect steps.

A second objective of the present work is to propose in Section~\ref{sec:new_framework} an adaptation of~\cite[Algorithm~2.1]{ViCu2012} which recovers the properties broken by the counterexample.
Section~\ref{sec:new_framework/mads} introduces this adaptation, relying on a strategy proposed in~\cite{AuBaKo22}.
It is based on the evaluation of a few additional points at each iteration to asymptotically evaluate a dense set of points around $x_*$, thus it evaluates points in each branch of~$f$ near~$x_*$ and reveals any discontinuity.
This leads to Theorem~\ref{th:main_result}, stated and proved in Section~\ref{sec:new_framework/analysis}, which claims results similar to~\cite[Corollary~4.1]{ViCu2012} under an assumption similar to~\cite[Assumption~4.1]{ViCu2012} but without requiring~$n_B = 2$.

%% %%%%%%%%%%%%%%%%%%%%%%%%%%%%%%%%%%%%%%%%%%%%%%%%%
\section{A single-variable unconstrained counterexample}
\label{sec:counterexample}
%% %%%%%%%%%%%%%%%%%%%%%%%%%%%%%%%%%%%%%%%%%%%%%%%%%

The single-variable real-valued objective function~$f : \R \to \R$ in this counterexample is illustrated in the left part of Figure~\ref{fig:f}.
It possesses local minima at $x = \frac{5}{4}\times 2^\ell$ for each integer $\ell \in \Z$.
The \ddsm parameters are chosen so that each incumbent solution lies at one of these local minimizers and the corresponding step parameters are sufficiently small so that each \poll step results in a failure.
A \search step is added only at odd-index iterations, generating a trial point at the next local minimizer to the left of the current incumbent solution.
The poll size parameter is halved at unsuccessful iterations, and remains constant at successful ones.
Hence, by construction, all even-index iterations fail, while all odd-index iterations succeed, and the sequence of the incumbent solutions converges to the origin.

\begin{figure}[ht]
	\centering
	\includegraphics[width=0.9\linewidth]{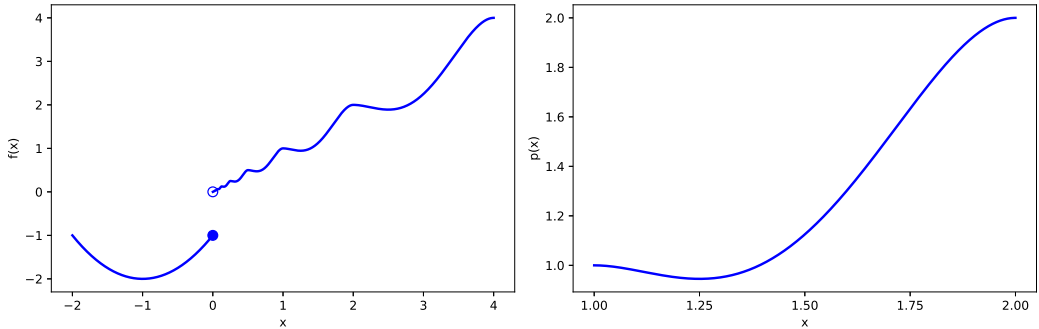}
	\caption{The function $f$ (left plot), and the polynomial $p$ (right plot) used to construct $f$.}
	\label{fig:f}
\end{figure}

%% %%%%%%%%%%%%%%%%%%%%%%%%%%%%%%%%%%%%%%%%%%%%%%%%%
\subsection{Construction of the objective function}
\label{sec:counterexample/f}
%% %%%%%%%%%%%%%%%%%%%%%%%%%%%%%%%%%%%%%%%%%%%%%%%%%

This section describes the construction of the discontinuous objective function $f : \R \to \R$ illustrated in the left plot of Figure~\ref{fig:f}.
On the closed interval $\R^-$, it is defined as the convex quadratic function given by~$f(x) = (x+1)^2-2$ for all~$x \in \R^-$.
On the open interval $\R^+_*$, it is constructed from the polynomial~$p : [1,2] \to \R$ illustrated in the right plot of Figure~\ref{fig:f} and given by
\begin{equation*}
	\label{eq:p}
	p(x) = -18 + 60x -69 x^2 + 34 x^3 - 6x^4 ~\mbox{ for all}~ x \in [1,2].
\end{equation*}
It satisfies $p(1) = 1$, $p(2) = 2$, $p'(1) = p'\left(\frac{5}{4}\right) = p'(2) = 0$, $p''(1) < 0,~ p''(2) < 0$, $p''\left(\frac{5}{4}\right) > 0$. Hence~$p$ has a unique local minimizer at $x = \frac{5}{4}$ and two local maximizers at the endpoints of the interval~$[1,2]$.
The objective function $f$ is constructed on $\R^+_*$ by scaling the polynomial $p$. Precisely, $f$ equals to $2p$ on the interval $2\times[1,2[$, to $p$ on $[1,2[$, to $\frac{p}{2}$ on $\frac{1}{2}\times[1,2[$ and, in general, to $2^\ell p$ on the interval~$2^\ell\times[1,2[$ with~$\ell \in \Z$.
Hence, on each interval $2^\ell \times [1, 2]$ with $\ell \in \Z$, $f$ has exactly three stationary points (given by a unique local minimizer at~$x = \frac{5}{4} \times 2^\ell$ and two local maximizers at the endpoints).

To summarize, consider the unconstrained optimization problem ($\Omega = \R$) given by
\begin{equation*}
	\label{eq:problem}
	\underset{x ~\in~ \R}{\minimize}~ f(x),
\end{equation*}
where
\begin{equation*}
	f(x) \ = \
	\left\{\begin{array}{ll}
		(x+1)^2-2
		& \mbox{if}~ x \leq 0, \\[5pt]
		2^\ell p\left(\dfrac{x}{2^\ell}\right)
		& \mbox{if}~ x \in 2^\ell\times[1,2[ \mbox{ with } \ell \in \Z, \\
	\end{array}\right. ~\mbox{ for all}~ x \in \R.
\end{equation*}

\vspace{0.1cm}

\begin{proposition}
	\label{prop:behavior_f}
	The function $f$ satisfies~\cite[Assumption~2.1]{ViCu2012} and~\cite[Assumption~4.1]{ViCu2012} with $x_* = 0$.
\end{proposition}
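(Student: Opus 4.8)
The plan is to verify the two assumptions separately, establishing along the way a statement slightly stronger than \cite[Assumption~2.1]{ViCu2012}: that $f$ is bounded below on all of $\R$ and that every sublevel set of $f$ is bounded, so that \cite[Assumption~2.1]{ViCu2012} holds for any initial point $x_0$. To this end I would first record that $p'$ factors as $p'(x)=-24(x-1)\left(x-\frac{5}{4}\right)(x-2)$, so that $1$, $\frac{5}{4}$, $2$ are the only critical points of $p$ on $[1,2]$; together with $p(1)=1$, $p(2)=2$ and $p''\left(\frac{5}{4}\right)>0$ this gives $\min_{[1,2]}p=p\left(\frac{5}{4}\right)>0$ and $\max_{[1,2]}p=2$. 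Hence on each interval $2^\ell\times[1,2[$ one has $2^\ell p\left(\frac{5}{4}\right)\leq f(x)=2^\ell p(x/2^\ell)\leq 2^{\ell+1}$, so $f>0$ on $\R^+_*$, while $f(x)=(x+1)^2-2\geq -2$ on $\R^-$; thus $f\geq -2$ on $\R$. For boundedness of sublevel sets, fix $c\in\R$: on $\R^-$ the condition $f(x)\leq c$ confines $x$ to $\{x\leq 0:(x+1)^2\leq c+2\}$, which is bounded; on $\R^+_*$, if $x\in 2^\ell\times[1,2[$ then $f(x)\leq c$ forces $2^\ell p\left(\frac{5}{4}\right)\leq c$, hence an upper bound on $\ell$ and thus on $x$. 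So $\{x:f(x)\leq c\}$ is bounded.

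For \cite[Assumption~4.1]{ViCu2012} I would take the neighbourhood $B=(-\varepsilon,\varepsilon)$ of $x_*=0$, for any $\varepsilon>0$, together with the two-set partition $B_1=(-\varepsilon,0]$ and $B_2=(0,\varepsilon)$ (so $n_B=2$). Item~1 is clear since $\int(B_1)=(-\varepsilon,0)$ and $\int(B_2)=(0,\varepsilon)$ are nonempty. Item~2 asks that $\cl(B_1)=[-\varepsilon,0]$ and $\cl(B_2)=[0,\varepsilon]$ have the exterior cone property of \cite[Definition~4.1]{ViCu2012}; I would check this directly, which is routine for closed intervals of $\R$ (at an interior point the whole line is available, at an endpoint a suitable half-line is). Item~3 for $B_1$ needs nothing: $f$ restricted to $\int(B_1)$ is the polynomial $(x+1)^2-2$, hence Lipschitz on this bounded interval and extendable (being a polynomial) continuously to $[-\varepsilon,0]$.

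The substantive step, and the one I expect to be the main obstacle, is item~3 for $B_2$: although $f|_{\R^+_*}$ is glued from the infinitely many scaled pieces $2^\ell p(\cdot/2^\ell)$, one must show the result is Lipschitz with a constant that does \emph{not} blow up as $\ell\to-\infty$. The key computation is that the chain rule cancels the scaling: on $2^\ell\times]1,2[$ one has $f'(x)=p'(x/2^\ell)$, independently of $\ell$. Moreover the pieces join in a $C^1$ fashion at each dyadic junction $x=2^\ell$, because $p(1)=1$, $p(2)=2$ force the left-hand limit $2^{\ell-1}p(2)=2^\ell=2^\ell p(1)=f(2^\ell)$, and $p'(1)=p'(2)=0$ force both one-sided derivatives at $2^\ell$ to be $0$. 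Hence $f|_{(0,\infty)}$ is $C^1$ with $\abs{f'}\leq M:=\max_{[1,2]}\abs{p'}<\infty$, so $M$-Lipschitz, in particular on $\int(B_2)=(0,\varepsilon)$. Finally $f|_{(0,\varepsilon)}$ extends continuously to $\partial B_2=\{0,\varepsilon\}$: at $\varepsilon$ since $f$ is already continuous there, and at $0$ since $0<f(x)\leq 2^{\ell+1}\leq 2x\to 0$ as $x\to 0^+$, so the extension takes the value $0$ at $0$. (This value differs from $f(0)=-1$, which is harmless since $0\notin B_2$, and is exactly the downward jump of $f$ at the origin that makes the counterexample possible.) Everything else — positivity of $\min_{[1,2]}p$, boundedness of sublevel sets, the exterior cone property of an interval — is elementary.
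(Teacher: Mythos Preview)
Your proposal is correct and follows essentially the same approach as the paper: partition a neighbourhood of $x_*=0$ into a left piece $B_1$ containing $0$ and a right piece $B_2$, then check the three items of \cite[Assumption~4.1]{ViCu2012}. The paper simply fixes $B={]}-2,2[$, $B_1={]}-2,0]$, $B_2={]}0,2[$ and asserts the conclusions without elaboration, whereas you supply the actual verifications (notably the uniform Lipschitz bound on $\R^+_*$ via $f'(x)=p'(x/2^\ell)$ and the limit $f(x)\to 0$ as $x\to 0^+$), which the paper leaves implicit.
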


\begin{proof}
	\cite[Assumption~2.1]{ViCu2012} is satisfied and~\cite[Assumption~4.1]{ViCu2012} holds for $x_* = 0$.
	Indeed, $x_* = 0$ has a neighborhood $B = {]}-2,2[$ that can be partitioned as $B = B_1 \cup B_2$ with $B_1 = {]}-2,0]$ and $B_2 = {]}0,2[$.
	Then, for both $i \in \{1,2\}$, $\int(B_i) \neq \emptyset$, $f$ is Lipschitz-continuous on $B_i$ and can be continuously extended to $\partial B_i$, and $\cl(B_i)$ has the exterior cone property.
\end{proof}

%% %%%%%%%%%%%%%%%%%%%%%%%%%%%%%%%%%%%%%%%%%%%%%%%%%
\subsection{Construction of the \ddsm instance}
\label{sec:counterexample/mads}
%% %%%%%%%%%%%%%%%%%%%%%%%%%%%%%%%%%%%%%%%%%%%%%%%%%

This section describes a \ddsm instance to minimize $f$.
The starting point is $x_0 = \frac{5}{4}$ and the initial step parameter is $\alpha_0 = \frac{1}{4}$.
The \poll sets of directions are $D_k = \{-1,1\}$ for all $k \in \N$.
The step parameter is updated as $\alpha_{k+1} = \alpha_k$ if the iteration $k$ finds a better point than the incumbent solution~$x_k$, and~$\alpha_{k+1} = \frac{\alpha_k}{2}$ otherwise.
The \search step generates no trial point when the iteration number $k$ is even, and returns a single trial point $t_k = x_k - 5\alpha_k$ when $k$ is odd.

These parameters are chosen so that, at each even iteration $k=2q$ with $q \in \N$, the incumbent solution~$x_{2q}$ is the local minimum of $f$ on the interval $2^{-q} \times [1,2]$, the \poll step fails because all its trial points lie in that interval, and the \search step running on the following odd iteration $2q+1$ generates the trial point corresponding to the next local minimizer on the left of $x_{2q}$ and located on the interval~$2^{-(q+1)}\times[1,2]$.
Hence the sequence of incumbent solutions $(x_k)_{k\in\N}$ converges to the origin, the sequence of step parameters~$(\alpha_k)_{k\in\N}$ converges to $0$, and both directions $d_1 = -1$ and $d_2 = 1$ are considered at each even iteration.
These claims are formalized in the next result.

\vspace{0.1cm}

\begin{lemma}
	\label{prop:expression_sequences_mads}
	The sequences of incumbent solutions $(x_k)_k$ and of step parameters $(\alpha_k)_k$ satisfy
	\begin{equation*}
		\forall q \in \N,
		\quad
		x_{2q} = \frac{5}{4}\times 2^{-q} = x_{2q+1},
		\quad
		\alpha_{2q} = \frac{1}{4}\times 2^{-q} = 2\alpha_{2q+1}.
	\end{equation*}
	At each even iteration $k=2q$ with $q \in \N$, the \poll step fails after evaluating both trial points~$x_{2q}+\alpha_{2q}d_1$ and~$x_{2q}+\alpha_{2q}d_2$.
\end{lemma}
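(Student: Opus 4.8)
The proof is a straightforward induction on $q \in \N$, unrolling the deterministic update rules of the \ddsm instance. The plan is to prove simultaneously, by induction on $q$, the three facts: (a) $x_{2q} = \frac{5}{4}\times 2^{-q}$ with $\alpha_{2q} = \frac{1}{4}\times 2^{-q}$; (b) iteration $2q$ is unsuccessful, with both poll trial points $x_{2q}\pm\alpha_{2q}$ lying in the interval $2^{-q}\times[1,2]$ and failing to improve on $x_{2q}$; (c) $x_{2q+1}=x_{2q}$, $\alpha_{2q+1}=\frac{1}{2}\alpha_{2q}$, and the \search step at iteration $2q+1$ is successful, producing $x_{2q+2} = \frac{5}{4}\times 2^{-(q+1)}$ with $\alpha_{2q+2}=\alpha_{2q+1}$.

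For the base case $q=0$ we have $x_0 = \frac{5}{4}$ and $\alpha_0 = \frac{1}{4}$ by the definition of the \ddsm instance. For the induction step, assuming (a) holds at stage $q$, I would first verify (b): the two poll points are $x_{2q}\pm\alpha_{2q} = (\frac{5}{4}\pm\frac{1}{4})\times 2^{-q} \in \{2^{-q}, \frac{3}{2}\times 2^{-q}\}$, both of which lie in $2^{-q}\times[1,2]$. Since $f$ restricted to $2^{-q}\times[1,2]$ equals $2^{-q}p(2^q x)$ and $p$ attains its unique minimum on $[1,2]$ at $x=\frac 54$ (a fact recorded in Section~\ref{sec:counterexample/f} from the sign conditions on $p'$ and $p''$), we get $f(x_{2q}\pm\alpha_{2q}) > f(x_{2q})$; as $k=2q$ is even the \search step contributes nothing, so iteration $2q$ is unsuccessful. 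The update rule then gives $x_{2q+1}=x_{2q}$ and $\alpha_{2q+1}=\frac{1}{2}\alpha_{2q}=\frac{1}{4}\times 2^{-(q+1)}$, which is the claimed relation $\alpha_{2q}=2\alpha_{2q+1}$. Next, for (c), the \search step at the odd iteration $k=2q+1$ evaluates $t_{2q+1} = x_{2q+1} - 5\alpha_{2q+1} = \frac{5}{4}\times 2^{-q} - \frac{5}{4}\times 2^{-(q+1)} = \frac{5}{4}\times 2^{-(q+1)}$, which is the unique local minimizer of $f$ on $2^{-(q+1)}\times[1,2]$. I would then check that $f(t_{2q+1}) < f(x_{2q+1})$, i.e. $2^{-(q+1)}p(\frac54) < 2^{-q}p(\frac54)$, which holds because $p(\frac54)>0$ (indeed $p(\frac54)$ can be computed explicitly, or one notes $p\geq p(1)=1>0$ on $[1,2]$ by the minimum property). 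Hence iteration $2q+1$ is successful, the step parameter is kept, $x_{2q+2}=t_{2q+1}=\frac{5}{4}\times 2^{-(q+1)}$ and $\alpha_{2q+2}=\alpha_{2q+1}=\frac{1}{4}\times 2^{-(q+1)}$, closing the induction.

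This lemma is almost entirely bookkeeping, so I do not anticipate a genuine obstacle; the only point requiring a moment of care is confirming that the poll points never escape the interval $2^{-q}\times[1,2]$ and therefore never "see" the discontinuity of $f$ at powers of $2$ — but this is immediate from $\frac 54\pm\frac14 \in [1,2]$. A second minor care point is making sure the comparison at the \search step uses the correct branch of $f$ (the $\R^+_*$ branch, since all incumbents are strictly positive), which is clear since $\frac54\times 2^{-q}>0$ for every $q$.
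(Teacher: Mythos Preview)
Your proof is correct and follows essentially the same induction as the paper's own argument (the paper inducts from $q-1$ to $q$ starting at the odd iteration $2q-1$, while you induct from $q$ to $q+1$ starting at the even iteration $2q$, but this is only a cosmetic shift). One small slip: the parenthetical ``or one notes $p\geq p(1)=1>0$ on $[1,2]$ by the minimum property'' is wrong, since the minimum of $p$ on $[1,2]$ is at $\tfrac54$, not at the endpoints; your primary justification (direct computation of $p(\tfrac54)>0$) is fine, so this does not affect the validity of the argument.
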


\begin{proof}
	The proof is done by induction over $q \in \N$.
	
	Iteration $0$ starts with $x_0 = \frac{5}{4}$ and $\alpha_0 = \frac{1}{4}$.
	It is an even iteration and, therefore, there is no \search step.
	The incumbent solution $x_0$ is a local minimizer on the interval $[x_0 - \alpha_0, x_0 + \alpha_0] = [1, \frac{3}{2}]$ and, therefore, the two candidates $x_0+\alpha_0d_1$ and $x_0+\alpha_0d_2$ generated by the \poll step fail to dominate $x_0$.
	Thus iteration $0$ is unsuccessful, hence $x_1 = x_0 = \frac{5}{4}$ and $\alpha_1 = \frac{1}{2}\alpha_0 = \frac{1}{4}\times 2^{-1}$.
	Then the induction statement holds for $q = 0$.
	
	Now consider $q \in \N_*$ and assume that the induction statement holds for $q-1$.
	This implies that iteration $2q-1$ starts with $x_{2q-1} = x_{2(q-1)+1} = \frac{5}{4}\times 2^{-(q-1)}$ and $\alpha_{2q-1} = \alpha_{2(q-1)+1} = \frac{1}{4}\times 2^{-q}$.
	It is an odd iteration, so the \search step generates the trial point $t_{2q-1} = x_{2q-1} - 5\alpha_{2q-1} = \frac{5}{4}\times 2^{-(q-1)} -\frac{5}{4}\times 2^{-q} = \frac{5}{4}\times 2^{-q}$.
	This trial point satisfies $f(t_{2q-1}) = \frac{f(x_{2q-1})}{2} < f(x_{2q-1})$. Hence iteration $2q-1$ is a success and iteration $2q$ starts with~$x_{2q} = t_{2q-1} = \frac{5}{4}\times 2^{-q}$ and $\alpha_{2q} = \alpha_{2q-1} = \frac{1}{4}\times 2^{-q}$.
	This is an even iteration and, therefore, there is no \search step.
	The incumbent solution $x_{2q}$ is a local minimizer on the interval~$[x_{2q} - \alpha_{2q}, x_{2q} + \alpha_{2q}] = 2^{-q}\times[1, \frac{3}{2}]$ and then, the two candidates $x_{2q}+\alpha_{2q}d_1$ and $x_{2q}+\alpha_{2q}d_2$ generated by the \poll step fail to dominate $x_{2q}$.
	Thus iteration $2q$ is unsuccessful, hence $x_{2q+1} = x_{2q} = \frac{5}{4}\times 2^{-q}$ and $\alpha_{2q+1} = \frac{1}{2}\alpha_{2q} = \frac{1}{4}\times 2^{-(q+1)}$.
	Then the induction statement holds for $q \in \N_*$ if it holds for $q-1$, which completes the proof.
\end{proof}

The sequence $(x_k;\alpha_k;D_k)_{k\in\N}$ constructed above satisfies the requirements of~\cite[Theorem~4.1]{ViCu2012} and~\cite[Corollary~4.1]{ViCu2012}. This claim is formalized in the next proposition.

\vspace{0.1cm}

\begin{proposition}
	\label{prop:behavior_mads}
	The sequence $(x_k;\alpha_k;D_k)_{k \in \N}$ is a refining sequence which converges to the refined point~$x_* = 0$ and the refining set of directions $D = \{-1,1\}$ is dense in the unit sphere.
\end{proposition}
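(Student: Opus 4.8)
The plan is to check directly that the subsequence indexed by the even integers, $K = \{2q : q \in \N\}$, fulfils the three defining conditions of a refining subsequence recalled in Section~\ref{sec:intro}, with refined point $x_* = 0$ and refining set of directions $D = \{-1,1\}$, and then to observe that in dimension one this $D$ is already the whole unit sphere. Note that Proposition~\ref{prop:behavior_f} has established that~\cite[Assumption~2.1]{ViCu2012} holds, so the notion of a refining subsequence is meaningful for the \ddsm instance built in Section~\ref{sec:counterexample/mads}.

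First I would invoke Lemma~\ref{prop:expression_sequences_mads}. It asserts that every even iteration $2q$ is unsuccessful: its two \poll trial points $x_{2q}\pm\alpha_{2q}$ are evaluated and fail to dominate $x_{2q}$, and no \search point is produced at an even iteration. This gives condition~(i) on $K$. The same lemma provides the closed forms $x_{2q} = \tfrac{5}{4}\,2^{-q}$ and $\alpha_{2q} = \tfrac{1}{4}\,2^{-q}$, so letting $q\to\infty$ yields $x_{2q}\to 0$ and $\alpha_{2q}\to 0$, which is condition~(ii) with $x_* = 0$.

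Next, for condition~(iii), I would use that $D_{2q} = \{-1,1\}$ for every $q$ and that both directions $d_1 = -1$ and $d_2 = 1$ are polled at each even iteration (Lemma~\ref{prop:expression_sequences_mads} again); since $\abs{\pm 1} = 1$, the normalized refining directions along $K$ are exactly $-1$ and $1$. Choosing the constant sequences $d_k \equiv 1$ and $d_k \equiv -1$ over $K_1 = K$ exhibits $1$ and $-1$ as limits of refining normalized directions, so $D = \{-1,1\}$ is an admissible (indeed the maximal) set of refining directions. Since the unit sphere of $\R$ is the two-point set $\{-1,1\} = D$, the set $D$ is equal to, hence trivially dense in, the unit sphere; and because $D_k = \{-1,1\}$ for all $k$ with both directions always polled, the same holds for the refining directions attached to any infinite subsequence $K' \subseteq K$, which also verifies the density hypothesis required later by~\cite[Theorem~4.1]{ViCu2012}.

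I do not expect a substantial obstacle here: the statement is essentially bookkeeping on top of Lemma~\ref{prop:expression_sequences_mads}, combined with the triviality of density in the two-point sphere of $\R$. The only point deserving a line of care is condition~(iii): one must check that it is the \poll directions in $\{-1,1\}$ that qualify as refining directions — not the \search displacement $t_k - x_k = -5\alpha_k$, which in any case occurs only at odd, successful, hence non-refining iterations — and that both of them, not merely one, persist to the limit along the even-indexed subsequence.
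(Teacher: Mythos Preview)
Your proposal is correct and follows essentially the same approach as the paper: both invoke Lemma~\ref{prop:expression_sequences_mads} to obtain $x_k \to 0$, $\alpha_k \to 0$, and the fact that every even iteration is an unsuccessful \poll evaluating both directions $\pm 1$, then conclude that $D = \{-1,1\}$ is the full unit sphere in $\R$. Your write-up is simply more explicit in checking the three refining-subsequence conditions and in noting that the density persists along any infinite $K' \subseteq K$.
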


\begin{proof}
	Lemma~\ref{prop:expression_sequences_mads} proves that the sequence of incumbent solutions $(x_k)_{k \in \N}$ converges to~$x_* = 0$ and that the sequence of poll parameters $(\alpha_k)_{k \in \N}$ converges to~$0$. 
	Furthermore, for each even value of~$k$, the \poll step evaluates $x_k+\alpha_kd$ for all~$d \in D_k = \{-1,1\}$.
    Thus the set of refining directions is~$D = \{-1,1\}$.
\end{proof}

%% %%%%%%%%%%%%%%%%%%%%%%%%%%%%%%%%%%%%%%%%%%%%%%%%%
\subsection{Contradiction to some conclusions of~\cite[Theorem~4.1]{ViCu2012} and~\cite[Corollary~4.1]{ViCu2012}}
\label{sec:counterexample/theorems_contradicted}
%% %%%%%%%%%%%%%%%%%%%%%%%%%%%%%%%%%%%%%%%%%%%%%%%%%

Propositions~\ref{prop:behavior_f} and~\ref{prop:behavior_mads} show that the counterexample satisfies all the requirements of~\cite[Theorem~4.1]{ViCu2012} and~\cite[Corollary~4.1]{ViCu2012}.
In that context~\cite[Theorem~4.1]{ViCu2012} claims that there exist a subsequence (indexed by $K' \subseteq \N$) and $i \in \{1,2\}$ such that the \poll step fails at each iteration $k \in K'$ and
\begin{equation*}
	(a):\quad
	\forall k \in K',~
	\left\{\begin{array}{l}
		x_k             \in \cl(B_i), \\
		x_k+\alpha_kd_2 \in \int(B_i),
	\end{array}\right.
	\qquad \mbox{and} \qquad
	(b):\quad
	\forall k \in K',~
	\left\{\begin{array}{l}
		x_k             \in \cl(B_i), \\
		x_k+\alpha_kd_1 \in \R \setminus \cl(B_i).
	\end{array}\right.
\end{equation*}

In addition~\cite[Corollary~4.1]{ViCu2012} asserts that
\begin{equation*}
	(c):\quad \underset{k \in K}{\lim}~ f(x_k) = f(x_*).
\end{equation*}

In the counterexample, the sequence of poll trial points has no element in~$B_1 = {]}-2,0]$ containing the refined point $x_* = 0$ on its border.
Thus, the claim $(a)$ holds while the claims $(b)$ and $(c)$ are false.
Indeed, by restricting to $i = 2$ (since $x_k \notin cl(B_1)$ for all $k \in \N$) and to the indexes~$K = 2\N$ where the \poll step fails, the claim $(a)$ holds with any $K' \subseteq K$.
However, for any $K' \subseteq K$, $\lim_{k \in K'} f(x_k) = 0 \neq -1 = f(x_*)$ and the trial point $x_k+\alpha_kd_1 = 2^{-q}$ belongs to $B_2$ for all $k=2q \in K'$ with~$q \in \N$.
Hence the claims $(b)$ and $(c)$ are incorrect.

%% %%%%%%%%%%%%%%%%%%%%%%%%%%%%%%%%%%%%%%%%%%%%%%%%%
\section{Analysis of the proof leading to incorrect conclusions}
\label{sec:problems_proofs}
%% %%%%%%%%%%%%%%%%%%%%%%%%%%%%%%%%%%%%%%%%%%%%%%%%%

The current section investigates the proof of~\cite[Theorem~4.1]{ViCu2012} in its general context.
In the paper~\cite{ViCu2012}, this proof is divided into six paragraphs that are summarized in the following six steps. An additional seventh step leads to~\cite[Corollary~4.1]{ViCu2012}.
The error in the proof lies in the fifth step.

Denote $i \in \ll 1,n_B \rr$ such that $x_*$ belongs to the partition set $B_i$.
First, the authors restrict the proof to the \mads algorithm since the proof can be easily adapted to other \ddsm.
Second, they address the case where $x_* \in \int(B_i)$.
Third, when $x_* \in \partial B_i$, the authors address first the subcase where all incumbent solutions~$x_k$ lie in the borders of the partition sets.
In the remaining subcases, they extract a subsequence (indexed by $K_1 \subseteq K$) such that~$x_k \in \int(B_i)$ for all $k \in K_1$.
Since the set of refining directions is dense in the unit sphere, they extract a second subsequence (indexed by $K_2 \subseteq K_1$) such that, at any iteration $k \in K_2$, some \poll trial points belong to $\int(B_i)$.
Fourth, the authors create a function $\overline{f}$ Lipschitz-continuously extending $f$ from $B_i$ to $\cl(B_i)$.
To the best of our knowledge, these four steps are all valid.

Then, in the fifth paragraph, the authors aim to prove by contradiction that some of the \poll points associated with iterations $k \in K_2$ do not belong to $\cl(B_i)$.
Their proof involves a function~$\widetilde{f}$ extending $\overline{f}$ from $B_i$ to $\R^n$ such that $\widetilde{f} = \overline{f}$ on $B_i$, $\widetilde{f}$ is Lipschitz-continuous on $\R^n$ and~$\widetilde{f}$ is locally strictly decreasing at any $z \in \partial B_i$ in all directions belonging to a cone with nonempty interior.
To the best of our knowledge, the construction of $\widetilde{f}$ is valid.
However, the authors consider a direction $d$ on which $\widetilde{f}$ is strictly decreasing at $x_*$ and erroneously claim that the Clarke derivative $\widetilde{f}^\circ_C(x_*;d)$ is thus strictly negative.
The error in the proof lies in this incorrect claim.
Indeed, the fact that $\widetilde{f}$ is strictly decreasing at $x_*$ in a direction $d$ does not imply that $\widetilde{f}^\circ_C(x_*;d)$ is strictly negative (for example, the single-variable function $x \mapsto -\abs{x}$ has a Clarke derivative equaling $1$ at $x = 0$ in both directions $d = 1$ and $d = -1$).
The following quote highlights the error.

\begin{quote}\textbf{[Erroneous claim in the fifth paragraph of the proof of~\cite[Theorem~4.1]{ViCu2012}]}
	Let us assume that all poll points associated with the refining subsequence belong to $\cl(B_i)$.
	We will see that this leads us to a contradiction.
	[\dots]
	Let $\widetilde{f}$ be the extended function (and $L$ its Lipschitz constant).
	We then obtain that
	\begin{equation*}
		\widetilde{f}^\circ_C(x_*;v) \geq [\dots] \geq 0,
	\end{equation*}
    for all refining directions $v$, which is a contradiction since these directions are dense in the unit sphere and $\widetilde{f}$ is locally strictly decreasing from $x_*$ along all directions in a cone of nonempty interior.
\end{quote}

The sixth paragraph, which considers the above claim as true, deduces that there exists a third subsequence (indexed by $K_3 \subseteq K_2$) for which the \poll step at iterations $k \in K_3$ generates trial points~$t_k = x_k+\alpha_kd_k$ satisfying $f(t_k) \geq f(x_k)$ and $t_k \notin \cl(B_i)$.
This concludes the proof of~\cite[Theorem~4.1]{ViCu2012}.
Finally a seventh paragraph, also relying directly on the above claim, leads to~\cite[Corollary~4.1]{ViCu2012}.
In the case $n_B = 2$, and arbitrarily denoting~$i = 2$, the authors claim that some poll trial points belong to $B_2$ and some to $B_1$ for all $k \in K_3$.
Then, the limit of the sequence $(f(x_k))_{k \in K_3}$ equals $f(x_*)$ via the lower semicontinuity of $f$ at $x_*$ and the failing \poll steps at iterations $k \in K_3$.
To the best of our knowledge, these last two steps are incorrect without assuming the inexact claim from the fifth paragraph. 
Therefore their conclusions are not valid in general.

%% %%%%%%%%%%%%%%%%%%%%%%%%%%%%%%%%%%%%%%%%%%%%%%%%%
\section{An adapted \ddsm to handle the counterexample}
\label{sec:new_framework}
%% %%%%%%%%%%%%%%%%%%%%%%%%%%%%%%%%%%%%%%%%%%%%%%%%%

The counterexample introduced in Section~\ref{sec:counterexample} shows that a sequence of incumbent solutions generated by a \ddsm may approach a discontinuity without noticing its existence.
Section~\ref{sec:new_framework/mads} proposes an adaptation of~\cite[Algorithm~2.1]{ViCu2012} (see Algorithm~\ref{algo:new_mads}) which avoids this drawback, as shown by the new convergence Theorem~\ref{th:main_result} stated and proved in Section~\ref{sec:new_framework/analysis}.

%% %%%%%%%%%%%%%%%%%%%%%%%%%%%%%%%%%%%%%%%%%%%%%%%%%
\subsection{A revised algorithm with a \revpoll step}
\label{sec:new_framework/mads}
%% %%%%%%%%%%%%%%%%%%%%%%%%%%%%%%%%%%%%%%%%%%%%%%%%%

This section adds a second \poll step to~\cite[Algorithm~2.1]{ViCu2012} to overcome the aforementioned drawback.
This so-called \revpoll step has been introduced in the literature as a part of the \textsc{Disco}\mads algorithm~\cite{AuBaKo22} (which is a variant of a \ddsm called the \mads algorithm) designed for a framework differing from the present work (including additional mechanisms to push incumbent solutions away from discontinuities).
At each iteration, the \revpoll step evaluates at least one random mesh point in a ball of constant radius centered at the current incumbent solution.
Therefore, when the sequence of incumbent solutions approaches a refined point $x_*$ at which $f$ is discontinuous and when the mesh becomes sufficiently fine, the \revpoll step eventually evaluates points in each branch of $f$ that has a nonempty interior near $x_*$.
This property allows to correct and to strengthen the convergence analysis provided in~\cite{ViCu2012}, as shown in the next Section~\ref{sec:new_framework/analysis}.

Algorithm~\ref{algo:new_mads} below encloses the addition of a \revpoll step to~\cite[Algorithm~2.1]{ViCu2012}.
Precisely, the \textbf{\search step}, the \textbf{\poll step} and the \textbf{Step size parameter update} follow~\cite[Algorithm~2.1]{ViCu2012}.
Also, the forcing function $\overline{\rho}$ (assumed to be continuous and nondecreasing on $\R^+$ with~$\rho(t)/t \to 0$ as~$t \searrow 0$) is extracted from~\cite[Algorithm~2.1]{ViCu2012}. Hence the only novelty lies in the \textbf{\revpoll step}, with its parameter $R>0$ in the \textbf{Initialization} step.

\begin{algorithm}[h]
	\caption{Directional direct-search method (following~\cite[Algorithm~2.1]{ViCu2012}, except for the additional \textbf{\revpoll step})}
	\label{algo:new_mads}
	\begin{algorithmic}
		\State \textbf{Initialization} \vspace{0.1cm}
		\State \hfill \begin{minipage}{0.96\linewidth}
			Choose $x_0 \in \Omega$ with $f(x_0) < +\infty$, $\alpha_0 > 0$, $0 < \beta_1 \leq \beta_2 < 1$, $\gamma \geq 1$ and a constant radius~$R > 0$
			for the \revpoll step.
			Let $\mathbb{D}$ be a (possibly infinite) set of positive spanning sets.
		\end{minipage} \vspace{0.2cm}
		\For{k = 0, 1, 2, \dots}
		\State \textbf{\search step} \vspace{0.1cm}
		\State \hfill \begin{minipage}{0.92\linewidth}
			Define a finite (possibly empty) set $S_k \subset \R^n$.
			Denote~$t_k \in \argmin \{f(x_k)-\overline{\rho}(\norm{t-x_k}) : t \in S_k \cap \Omega\}$.
			If $f(t_k) < f(x_k)-\overline{\rho}(\norm{t-x_k})$, set~$x_{k+1} = t_k$ and skip the next two steps.
		\end{minipage} \vspace{0.2cm}
		\State \textbf{\revpoll step} \vspace{0.1cm}
		\State \hfill \begin{minipage}{0.92\linewidth}
			Randomly define a nonempty finite set $D_k^r$ according to the uniform independent distribution over the closed ball of radius $R$ centered at $0$.
			Denote~$P^r_k = \{x_k+d : d \in D^r_k\}$ and~$t_k \in \argmin \{f(x_k)-\overline{\rho}(\norm{t-x_k}) : t \in P^r_k \cap \Omega\}$.
			If $f(t_k) < f(x_k)-\overline{\rho}(\norm{t-x_k})$, set~$x_{k+1} = t_k$ and skip the next \poll step.
		\end{minipage} \vspace{0.2cm}
		\State \textbf{\poll step} \vspace{0.1cm}
		\State \hfill \begin{minipage}{0.92\linewidth}
			Choose $D_k \in \mathbb{D}$.
			Denote~$P_k = \{x_k+\alpha_kd : d \in D_k\}$ and~$t_k \in \argmin \{f(x_k)-\overline{\rho}(\norm{t-x_k}) : t \in P_k \cap \Omega\}$.
			If $f(t_k) < f(x_k)-\overline{\rho}(\norm{t-x_k})$, then set $x_{k+1} = t_k$.
			Otherwise, set $x_{k+1} = x_k$.
		\end{minipage} \vspace{0.2cm}
		\State \textbf{Step size parameter update} \vspace{0.1cm}
		\State \hfill \begin{minipage}{0.92\linewidth}
			If $x_{k+1} \neq x_k$, increase the step size parameter as $\alpha_{k+1} \in [\alpha_k,\gamma\alpha_k]$.
			Otherwise strictly decrease the step size parameter as $\alpha_{k+1} \in [\beta_1\alpha_k,\beta_2\alpha_k]$.
		\end{minipage}
		\EndFor
	\end{algorithmic}
\end{algorithm}

Algorithm~\ref{algo:new_mads} evaluates the trial points $\mathcal{P}_k = P^r_k \cup P_k$ and trial directions $\D_k = D^r_k \cup D_k$ at each unsuccessful polling iteration $k \in \N$.
Hence, the definition of a refining sequence is adapted as follows.
A \textit{refining subsequence} $(x_k;\alpha_k;\D_k)_{k \in K}$ with corresponding \textit{refined point} $x_*$ and set $\D$ of \textit{refining unitary directions} satisfy the three following conditions: (i) $f(t) \geq f(x_k) - \overline{\rho}(\norm{t-x_k})$ for all~$t \in S_k \cup \mathcal{P}_k$; (ii) $(x_k)_{k \in K}$ converges to~$x_*$ and $(\alpha_k)_{k \in K}$ converges to $0$; (iii) any direction in $D$ is the limit of a subsequence of refining normalized directions~$(\frac{d_k}{\norm{d_k}})_{k \in K_1}$ where~$0 \neq d_k \in \D_k$ for all $k \in K_1 \subseteq K$.

%% %%%%%%%%%%%%%%%%%%%%%%%%%%%%%%%%%%%%%%%%%%%%%%%%%
\subsection{Analysis of the revised algorithm}
\label{sec:new_framework/analysis}
%% %%%%%%%%%%%%%%%%%%%%%%%%%%%%%%%%%%%%%%%%%%%%%%%%%

Algorithm~\ref{algo:new_mads} inherits the properties of~\cite[Algorithm~2.1]{ViCu2012} since the \revpoll step can be viewed as a specific \search step.
This section proves that Algorithm~\ref{algo:new_mads} has an additional asymptotic result sufficiently strong to correct and generalize~\cite[Theorem~4.1]{ViCu2012} and~\cite[Corollary~4.1]{ViCu2012}.
This result is stated below as Theorem~\ref{th:main_result} and is based on the next Lemma~\ref{prop:behavior_revealing_poll}, which is similar to~\cite[Lemma~4.12]{AuBaKo22}, and on Assumption~\ref{hyp:closure_branches} which replaces~\cite[Assumption~4.1]{ViCu2012}.
In the sequel, $\B_r(x)$ denotes the closed ball of~$\R^n$ of radius $r > 0$ centered at $x \in \R^n$, and $\S^n$ denotes the unit sphere in $\R^n$.

\vspace{0.1cm}

\begin{lemma}
	\label{prop:behavior_revealing_poll}
	Let $(x_k;\alpha_k;\D_k)_{k \in K}$ be a refining subsequence generated by Algorithm~\ref{algo:new_mads} with refined point~$x_*$ and set $\D$ of refining unitary directions.
	The set $\cup_{k \in K} \P_k$ is almost surely dense in the closed ball~$\B_R(x_*)$ and $\D$ is almost surely dense in $\S^n$.
\end{lemma}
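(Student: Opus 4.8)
The plan is to establish the two density statements separately, using the Borel–Cantelli lemma together with the convergence of the incumbent solutions and the step sizes.

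\medskip

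\noindent\textbf{Density of $\cup_{k\in K}\P_k$ in $\B_R(x_*)$.}
First I would fix a countable dense family of open balls $(U_j)_{j\in\N}$ in $\B_R(x_*)$; it suffices to show that, almost surely, for every $j$ there is some $k\in K$ with $\P_k\cap U_j\neq\emptyset$. Fix one such $U_j$, with center $y_j$ and radius $\varepsilon_j>0$. Since $(x_k)_{k\in K}\to x_*$ and $\|y_j-x_*\|<R$, for all sufficiently large $k\in K$ the point $y_j$ lies in the interior of $\B_R(x_k)$ and in fact the smaller ball $\B_{\varepsilon_j/2}(y_j)$ is contained in $\B_R(x_k)$; hence the event ``$D_k^r$ contains a point landing in $\B_{\varepsilon_j/2}(y_j)$'', equivalently ``$P_k^r\cap U_j\neq\emptyset$'', has probability at least $c_j:=(\varepsilon_j/2)^n/R^n>0$ (the ratio of Lebesgue measures of the two balls, using that $D_k^r$ is nonempty with at least one point drawn uniformly on $\B_R(0)$), uniformly in such $k$. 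Because $K$ is infinite and the draws $D_k^r$ are independent across $k$, the (conditional) second Borel–Cantelli lemma gives that almost surely $P_k^r\cap U_j\neq\emptyset$ for infinitely many $k\in K$. Taking a countable intersection over $j$ preserves the almost-sure conclusion, and since $\P_k\supseteq P_k^r$ this yields that $\cup_{k\in K}\P_k$ is almost surely dense in $\B_R(x_*)$.

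\medskip

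\noindent\textbf{Density of $\D$ in $\S^n$.}
Here I would exploit that the \revpoll directions $d_k\in D_k^r$, once normalized, are ``asymptotically uniform'' on $\S^n$. Fix a countable dense family of spherical caps $(V_m)_{m\in\N}$ on $\S^n$. For each $m$, the conical region $C_m:=\{\,td:t\in(0,R],\ d/\|d\|\in V_m\,\}$ has strictly positive Lebesgue measure inside $\B_R(0)$, so at any iteration $k\in K$ the event ``some $d_k\in D_k^r$ has $d_k/\|d_k\|\in V_m$'' has probability at least some $c_m'>0$ independent of $k$. As above, independence across the infinitely many $k\in K$ and the second Borel–Cantelli lemma give that almost surely, for each $m$, there are infinitely many $k\in K$ with a refining normalized direction falling in $V_m$; one then extracts a convergent subsequence whose limit lies in $\cl(V_m)$ and hence, letting the caps shrink, recovers every direction of $\S^n$ as such a limit. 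A countable intersection over $m$ makes this simultaneous almost surely, so $\D$ is almost surely dense in $\S^n$. (Note that $\D_k\supseteq D_k^r$, so the refining directions coming from the \revpoll step already suffice.)

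\medskip

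\noindent\textbf{Main obstacle.}
The only delicate point is the book-keeping of the probabilistic independence: the index set $K$ of the refining subsequence is itself determined by the run of the algorithm (it depends on which iterations are unsuccessful), so the events I want to apply Borel–Cantelli to are not a priori independent of the event $\{k\in K\}$. I would handle this by conditioning: the draws $D_k^r$ enter the determination of $K$ only through whether they produce an improving point, but one can instead argue along the deterministic super-sequence of all iterations, observe that on $\{k\in K\}$ the revpoll was in particular unsuccessful, and note that the uniform lower bounds $c_j,c_m'$ on the relevant geometric events hold conditionally on the entire past; a standard conditional Borel–Cantelli (Lévy's extension) then applies on the almost-sure event that $K$ is infinite (which holds whenever $x_0\in L(x_0)$ bounded, by the deterministic part of the analysis). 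This is exactly the structure of \cite[Lemma~4.12]{AuBaKo22}, so the argument can be imported essentially verbatim.
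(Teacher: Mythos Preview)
Your proposal is correct and follows essentially the same approach as the paper: for a fixed target ball (or spherical cap) you bound the hitting probability of each \revpoll draw from below by a positive constant using the uniform sampling, and then appeal to independence across the infinitely many indices in $K$ to conclude. Your write-up is in fact more careful than the paper's on two points it glosses over---the countable-base reduction needed to go from ``for each $y$, almost surely'' to ``almost surely dense'', and the conditional-independence bookkeeping arising because $K$ is itself random---so nothing needs to be added.
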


\begin{proof}
	This result is proved in~\cite[Lemma~4.12]{AuBaKo22} in the case of the \mads algorithm.
	The current proof focuses on the general case of a \ddsm.
	Let $\varepsilon > 0$.
	
	First, let $u \in \S^n$ and let us prove that $\B_\varepsilon(u)$ contains a direction $d \in \D$.
	Recall that each element of the infinite set~$\cup_{k \in K} D^r_k$ is drawn from the uniform independent distribution on $\B_R(0)$.
	Then, almost surely there exists an infinite subset of $K$ denoted $L$ such that for all $k \in L$, the random event~$\{\frac{d}{\norm{d}} : d \in D^r_k\} \cap \B_\varepsilon(u) \neq \emptyset$ happens.
	Let $(\frac{d_k}{\norm{d_k}})_{k \in L}$ with $d_k \in D^r_k$ and $\frac{d_k}{\norm{d_k}} \in \B_\varepsilon(u)$ for all~$k \in L$.
	Since $\B_\varepsilon(u)$ is a compact set, $(\frac{d_k}{\norm{d_k}})_{k \in L}$ has an accumulation unitary direction $d$ belonging to~$\D \cap \B_\varepsilon(u)$.
	
	Second, let $y \in \B_R(x_*)$, and let us prove that there is a \revpoll trial point $x \in \cup_{k \in K}P^r_k$ that belongs to $\B_\varepsilon(y)$.
	Since $x_*$ is a refined point of $(x_k)_{k \in K}$, there exists a subsequence (indexed by~$K_1 \subseteq K$) such that~$\norm{x_k-x_*} \searrow 0$ with $k \in K_1$.
	Thus, there exist another subsequence (indexed by~$K_2 \subseteq K_1$) such that the distance~$d(y, \B_R(x_k)) \searrow 0$ and an index $k_0 \in K_2$ such that $d(y, \B_R(x_k)) \leq \frac{\varepsilon}{2}$ for all $k \in K_3 = \{k : k \in K_2,~ k \geq k_0\}$.
	Then, denoting $\Theta_k = \int(\B_R(x_*) \cap \B_\varepsilon(y) \cap \B_R(x_k))$ for all~$k \in K_3$ and $\mathrm{vol}(\cdot)$ the volume of a set, one has
	\begin{equation*}
		\emptyset \ \neq \ \int(\Theta_k) \ = \  \Theta_k \ \subset \  \B_R(x_k)
		\quad \mbox{and} \quad
		0 \ \leq \ 
		\dfrac
		{\mathrm{vol}\left(\B_R(x_k)\setminus\Theta_k\right)}
		{\mathrm{vol}\left(\B_R(x_k)\right)}
		\ \leq \ 
		\dfrac
		{\mathrm{vol}\left(\B_R(x_{k_0})\setminus\Theta_{k_0}\right)}
		{\mathrm{vol}\left(\B_R(x_{k_0})\right)}
		\ < \ 1,
	\end{equation*}
	for all $k \in K_3$.
	Indeed, $\Theta_k$ is a nonempty open subset of $\B_R(x_k)$ for all $k \in K_3$.
    The second inequality follows from~$\mathrm{vol}(\B_R(x_k)) = \mathrm{vol}(\B_R(x_{k_0}))$ and $\B_R(x_k)\setminus\Theta_k = \B_R(x_k)\setminus(A\cup\B_\varepsilon(y))$ and $\norm{x_k-x_*} \searrow 0$ for all~$k \in K_3$.
	The first and third inequalities follow from $0 \leq \mathrm{vol}(\B_R(x_k)\setminus\Theta_k) < \mathrm{vol}(\B_R(x_k))$, since $\Theta_k$ is an open set included in $\B_R(x_k)$ for all $k \in K_3$.
	
	Denoting $E_k$ the random event $P_k^r \cap \Theta_k \neq \emptyset$ conditionally to $x_k$ for any $k \in K_3$, the proof is complete if at least one of the events $(E_k)_{k \in K_3}$ happens.
	The probability of any of the events $E_k$ is
    \begin{equation*}
		\mathbb{P}\left(E_k\right)
		\ = \
		1 - \left(
		\dfrac{\mathrm{vol}\left(\B_R(x_k)\setminus\Theta_k\right)}
		{\mathrm{vol}\left(\B_R(x_k)\right)}
		\right)^{\#P_k^r}
		\ \geq \
		1 -
		\dfrac{\mathrm{vol}\left(\B_R(x_{k_0})\setminus\Theta_{k_0}\right)}
		{\mathrm{vol}\left(\B_R(x_{k_0})\right)}
		\ = \ c \ > \ 0,
	\end{equation*}
    where $\#P_k^r \geq 1$ denotes the number of trial points in the \revpoll step at iteration $k$.
	Thus, for any $k \in K_3$, $\mathbb{P}(E_k)$ is bounded below by a strictly positive constant $c$.
	Since moreover the events~$(E_k)_{k \in K_3}$ are all independent, then almost surely at least one of them happens.
\end{proof}

\begin{assumption}
    \label{hyp:closure_branches}
    Any refined point has a neighborhood $B$ with a finite partition $B = \cup_{i=1}^{n_B} B_i$ satisfying the first and third statements of~\cite[Assumption~4.1]{ViCu2012} and $\cl(B_i) = \cl(\int(B_i))$ for each $i \in \ll1,n_B\rr$.
\end{assumption}

Assumption~\ref{hyp:closure_branches} replaces~\cite[Assumption~4.1]{ViCu2012}, changing the requirement involving the exterior cone property on each branch of $f$ by a requirement that no branch has a meagre subset non-adherent to its interior.
Then, with the \revpoll step and Lemma~\ref{prop:behavior_revealing_poll}, the following Theorem~\ref{th:main_result} holds.

\begin{theorem}
	\label{th:main_result}
	Under~\cite[Assumption~2.1]{ViCu2012} and Assumption~\ref{hyp:closure_branches}, let $x_* \in \Omega$ be the refined point and~$\D$ be the set of refining unitary directions of a refining subsequence $(x_k;\alpha_k;\D_k)_{k \in K}$ generated by Algorithm~\ref{algo:new_mads}.
	Then $\D$ is almost surely dense in $\S^n$.
	If $f$ is lower semicontinuous at $x_*$, then
	the partition set~$B_* \in \{B_1,\dots,B_{n_B}\}$ for which $x_* \in B_*$ satisfies
	\begin{enumerate}
		\item if $x_* \in \int(B_*)$, then $\lim_{k \in K}f(x_k) = f(x_*)$ and $f^\circ_C(x_*;d) \geq 0$ for all $d \in \D \cap T_\Omega(x_*)$;
		\item otherwise $x_* \in \partial B_*$ and there exists a subsequence (indexed by $K_* \subseteq K$) such that $x_k \in \cl(B_*)$ for all $k \in K_*$, $\lim_{k \in K_*} f(x_k) = f(x_*)$, and $\cup_{k \in K_*}\P_k$ is almost surely dense in $B \cap \B_R(x_*)$.
	\end{enumerate}
\end{theorem}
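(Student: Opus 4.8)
The plan is to follow the dichotomy of the statement. The density of $\D$ in $\S^n$ is exactly the second conclusion of Lemma~\ref{prop:behavior_revealing_poll}, so nothing new is needed there. When $x_* \in \int(B_*)$, a ball $\B_{r_0}(x_*)$ lies inside $\int(B_*)$, on which $f$ coincides with its Lipschitz restriction; hence $f$ is locally Lipschitz, in particular continuous, at $x_*$, and $x_k \to x_*$ yields $\lim_{k\in K} f(x_k) = f(x_*)$. The inequality $f^\circ_C(x_*;d)\ge 0$ for refining $d \in T_\Omega(x_*)$ then follows from the locally Lipschitz analysis of~\cite[Theorem~3.1]{ViCu2012}: each unsuccessful \poll step gives $f(x_k+\alpha_k d_k) \ge f(x_k) - \overline{\rho}(\alpha_k\norm{d_k})$, and passing to the limit after dividing by $\alpha_k\norm{d_k} \to 0$, using $\overline{\rho}(t)/t \to 0$ and the Lipschitz constant, produces $f^\circ_C(x_*;d)\ge 0$; the refining directions arising from the \revpoll step are absorbed by remarking that, by Lemma~\ref{prop:behavior_revealing_poll}, the trial points are dense near $x_*$, so that $f(y) \ge f(x_*) - \overline{\rho}(\norm{y-x_*})$ holds for all $y$ close to $x_*$, which already gives $f^\circ_C(x_*;d)\ge 0$ for every $d$.

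The substantial case is $x_* \in \partial B_*$. First I would note that each iteration of Algorithm~\ref{algo:new_mads} either keeps the incumbent or strictly decreases $f$, so $(f(x_k))_k$ is non-increasing; it is bounded below by~\cite[Assumption~2.1]{ViCu2012}, hence converges to $\phi := \inf_k f(x_k)$, and so does every subsequence. Lower semicontinuity at $x_*$ gives $\phi = \liminf_k f(x_k) \ge f(x_*)$. For the reverse inequality I would argue by contradiction. Let $\overline{f}_*$ be the continuous extension of $f$ from $\int(B_*)$ to $\cl(B_*)$, so that $f(x_*) = \overline{f}_*(x_*)$ since $x_* \in B_*$, and note $x_* \in \cl(\int(B_*))$ by Assumption~\ref{hyp:closure_branches}. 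If $\phi > \overline{f}_*(x_*)$, choose a feasible $y \in \int(B_*)$ close enough to $x_*$ that $\overline{f}_*(y) + \overline{\rho}(\norm{y-x_*}) < \phi$; by the proof of Lemma~\ref{prop:behavior_revealing_poll} there is almost surely a subsequence $K_y \subseteq K$ and \revpoll trial points $t_k \in P^r_k$ with $t_k \to y$, so eventually $t_k \in \int(B_*)\cap\Omega$, $f(t_k) \to \overline{f}_*(y)$ and $\norm{t_k - x_k} \to \norm{y-x_*}$; then for large $k\in K_y$ one gets $f(t_k) < f(x_k) - \overline{\rho}(\norm{t_k-x_k})$, using $f(x_k)\ge\phi$, so iteration $k$ is successful, contradicting $k\in K$. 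Hence $\phi = f(x_*)$.

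It remains to extract the subsequence and the density. By the pigeonhole principle some branch $B_i$ contains $x_k$ for infinitely many $k \in K$; along that subsequence $x_k \to x_*$ with $x_k \in B_i$, hence $x_* \in \cl(B_i)$ and $f(x_k) = \overline{f}_i(x_k) \to \overline{f}_i(x_*)$, so $\overline{f}_i(x_*) = \phi = f(x_*)$. If $B_i$ is already the partition set containing $x_*$, take $K_*$ to be this subsequence. Otherwise, since $\overline{f}_i(x_*) = f(x_*) = \overline{f}_*(x_*)$, moving the single point $x_*$ from $B_*$ into $B_i$ produces a new partition still satisfying Assumption~\ref{hyp:closure_branches} — interiors, closures and the continuous extensions are unchanged — and after this relabelling $x_* \in B_* = B_i$ and $x_k \in B_* \subseteq \cl(B_*)$ along $K_*$. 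Finally $(x_k;\alpha_k;\D_k)_{k\in K_*}$ is itself a refining subsequence, so Lemma~\ref{prop:behavior_revealing_poll} applied to it shows $\cup_{k\in K_*}\P_k$ is almost surely dense in $\B_R(x_*)$, hence in $B\cap\B_R(x_*)$, and $\lim_{k\in K_*} f(x_k) = \phi = f(x_*)$.

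I expect the main obstacle to be the case $x_* \in \partial B_*$, and within it two coupled points. The first is pinning down $\phi = f(x_*)$: this is precisely the property that the counterexample of Section~\ref{sec:counterexample} violates for the original algorithm, and it can only be recovered from the density of Lemma~\ref{prop:behavior_revealing_poll} combined with the failure condition defining the refining iterations; getting the quantifiers in the right order (the point $y$ depending on $\phi$, then the subsequence $K_y$, then passing all limits) is the delicate part. The second is the claim that some subsequence lies in $\cl(B_*)$, which is false for an arbitrary fixed partition and forces a re-selection of the partition at $x_*$ — a step that is legitimate only once $\phi = f(x_*)$ has been established. Additional care, although routine, is needed for the possibility that the incumbents accumulate on the measure-zero set $\cup_i \partial B_i$ and for the feasibility with respect to $\Omega$ of the \revpoll targets.
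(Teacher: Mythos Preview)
Your argument parallels the paper's in its use of Lemma~\ref{prop:behavior_revealing_poll} and in the contradiction scheme for the limit, but the two diverge on how the subsequence in $\cl(B_*)$ is produced and in the order of the steps. The paper invokes the first three steps of the proof of~\cite[Theorem~4.1]{ViCu2012} (declared valid in Section~\ref{sec:problems_proofs}): this reduces to $\overline{\rho}\equiv 0$, dispatches the interior case, and directly supplies a subsequence $K_*$ with $x_k\in\cl(B_*)$ for the \emph{given} $B_*$ containing $x_*$; the equality $f_*=f(x_*)$ is then obtained \emph{afterwards}, from density of $\cup_{k\in K_*}\P_k$ in $B_*\cap\B_R(x_*)$ and the bare inequality $f(t)\ge f(x_k)$ at unsuccessful iterations. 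You instead prove $\phi=f(x_*)$ first on the full $K$ (carrying $\overline{\rho}$ through, which makes the extraction of $t_k\to y$ a little more delicate than in the paper), then pigeonhole into some $B_i$ and relabel the partition by moving $x_*$ into $B_i$. The relabeling is internally consistent --- and is only possible \emph{because} you already know $\overline{f}_i(x_*)=\phi=f(x_*)$ --- but the $B_*$ in your conclusion need not be the one fixed by Assumption~\ref{hyp:closure_branches}; the theorem as stated refers to ``the partition set $B_*$ for which $x_*\in B_*$''. So your route is self-contained and arguably more transparent about where the subsequence comes from, while the paper's keeps the original partition fixed at the price of depending on~\cite{ViCu2012} for that extraction.
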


\begin{proof}
	The first three steps mentioned in Section~\ref{sec:problems_proofs} remain valid in the present framework.
    The remaining steps are not required.
	Thanks to these three steps, the proof is restricted to $\overline{\rho} \equiv 0$, the case where $x_* \in \int(B_*)$ is proved and, when $x_* \in \partial B_*$, there exists a subsequence (indexed by~$K_1 \subseteq K$) such that $x_k \in \cl(B_*)$ for all $k \in K_1$.
	Recall that, in contrary to the proof of~\cite[Theorem~4.1]{ViCu2012} and~\cite[Corollary~4.1]{ViCu2012}, the set of trial points at any iteration $k \in \N$ is $\P_k = P^r_k \cup P_k$ instead of~$P_k$.
    Set~$K_* = K_1$.
	Lemma~\ref{prop:behavior_revealing_poll} ensures that $\D$ is almost surely dense in the unit sphere.
    Hence, it suffices to show that~$\cup_{k \in K_*}\P_k$ is dense in $B \cap \B_R(x_*)$, and that $f_* = \lim_{k \in K_*} f(x_k)$ equals $f(x_*)$.
    
    First, let us prove that $\cup_{k \in K_*}\P_k$ is almost surely dense in $B \cap \B_R(x_*)$ and in $B_* \cap \B_R(x_*)$.
    Recall that, for any sets $S_1$ and $S_2$, $\cl(S_1 \cap S_2) = \cl(S_1) \cap \cl(S_2)$ when $\partial S_1 \cap \partial S_2 \subseteq \partial(S_1 \cap S_2)$, and note that $\partial(\int(B)) \cap \partial(\int(\B_R(x_*))) \subseteq \partial(\int(B) \cap\int(\B_R(x_*)))$ and $\partial B \cap \partial \B_R(x_*) \subseteq \partial(B \cap \B_R(x_*))$.
    Note also that Assumption~\ref{hyp:closure_branches} leads to $\cl(\int(B)) = \cl(B)$.
    Then, Lemma~\ref{prop:behavior_revealing_poll} guarantees that $\cup_{k \in K_*} \P_k$ is dense in $\cl(\int(B \cap \B_R(x_*))) = \cl(\int(B) \cap \int(\B_R(x_*)) = \cl(\int(B)) \cap \cl(\int(\B_R(x_*))) = \cl(B) \cap \cl(\B_R(x_*)) = \cl(B \cap \B_R(x_*))$.
    Then $\cup_{k \in K_*} \P_k$ is dense in $B \cap \B_R(x_*)$.
    The second claim holds by a similar argument.
    
	Second, let us prove that $\lim_{k \in K_*}f(x_k) = f(x_*)$.
    The sequence $(f(x_k))_{k \in K_*}$ is decreasing and bounded below, thus it converges, and $f_* \geq f(x_*)$ by lower semicontinuity of $f$ at~$x_*$.
	Moreover, assuming that~$f_* > f(x_*)$ raises a contradiction.
	Indeed it implies that $f(x_k) \geq f(x_*) +\varepsilon$ for some $\varepsilon > 0$ and all $k \in K_*$.
	Then $\inf\{f(x) : x \in \cup_{k \in K_*} \P_k\} \geq f(x_*) + \varepsilon$ since $f(x) \geq f(x_k)$ for all $k \in K_*$ and $x \in \P_k$.
	Thus $\inf\{f(x) : x \in (\cup_{k \in K_*} \P_k) \cap B_*\} \geq f(x_*) + \varepsilon$.
	This contradicts the continuity at $x_*$ of the restriction of $f$ to $B_*$, since $\cup_{k \in K_*} \P_k$ is almost surely dense in $B_* \cap \B_R(x_*)$.
\end{proof}

Following~\cite[Theorem~4.1]{ViCu2012}, its corollary~\cite[Corollary~4.1]{ViCu2012} attempts to provide sufficient conditions to guarantee that a \ddsm generates a refining sequence $(x_k)_{k \in K}$ with a refined point $x_*$ satisfying~$\lim_{k \in K} f(x_k) = f(x_*)$.
Theorem~\ref{th:main_result} recovers this property.
It also guarantees that, for any value of $n_B$, an infinite subset of each branch of $f$ near $x_*$ is evaluated, while~\cite[Corollary~4.1]{ViCu2012} claims this property under the restriction $n_B = 2$.
Note that Theorem~\ref{th:main_result} is established under Assumption 1 which replaces~\cite[Assumption~4.1]{ViCu2012} (precisely the exterior cone property is replaced by the assumption that $\cl(Bi) = \cl(\int(B_i))$ for each $i$).
Finally Theorem~\ref{th:main_result} also ensures the density of the set of refining directions, which is required in~\cite[Theorem~3.2]{ViCu2012} but not ensured by~\cite[Corollary~4.1]{ViCu2012}.
Consequently~\cite[Theorems~3.1 and~3.2]{ViCu2012} remain usable.

%% %%%%%%%%%%%%%%%%%%%%%%%%%%%%%%%%%%%%%%%%%%%%%%%%%
\subsection{Algorithm~\ref{algo:new_mads} handles the counterexample}
%% %%%%%%%%%%%%%%%%%%%%%%%%%%%%%%%%%%%%%%%%%%%%%%%%%

This section shows how Algorithm~\ref{algo:new_mads} handles the counterexample constructed in Section~\ref{sec:counterexample}, thanks to the \revpoll step.
Consider the \ddsm constructed in Section~\ref{sec:counterexample/mads} with the addition of a \revpoll step which generates a single trial point per iteration with the polling radius~$R > 0$ chosen as~$R = 2$ for the ease of presentation.
This instance of Algorithm~\ref{algo:new_mads} satisfies the next Proposition~\ref{prop:behavior_new_mads}, proving that the counterexample constructed in Section~\ref{sec:counterexample} is addressed.
The proof of this proposition can be adapted to remain valid with any polling radius $R > 0$.

\vspace{0.1cm}

\begin{proposition}
	\label{prop:behavior_new_mads}
	The aforementioned instance of Algorithm~\ref{algo:new_mads} almost surely generates a refining subsequence of $(x_k;\alpha_k;\D_k)_{k\in\N}$ with the refined point $x_* = -1$ (the global minimizer of $f$) and $\D = \{-1,1\}$ as the set of refining unitary directions.
\end{proposition}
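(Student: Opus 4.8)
The plan is to proceed in two stages: first show that the \revpoll step almost surely drives the incumbent onto the branch $\R^-$ after finitely many iterations, and then show that on that branch Algorithm~\ref{algo:new_mads} is a classical direct search minimising a smooth strictly convex function whose only stationary point is $-1$. One works throughout with simple decrease ($\overline{\rho} \equiv 0$, as in Section~\ref{sec:counterexample/mads}), so $(\alpha_k)_k$ is nonincreasing with $\alpha_k \leq \frac14$ and $(f(x_k))_k$ is nonincreasing. The fact to exploit is that $f(x) = (x+1)^2 - 2 =: g(x)$ for every $x \leq 0$, so that $-1$ is the global minimiser of $f$ with value $-2$ (on $\R^-$, $g \geq -2$; on $\R^+_*$, $f = 2^\ell p(\cdot/2^\ell) \geq 2^\ell p(\frac54) > 0$). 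Moreover, whenever the incumbent is positive the \search, \revpoll and \poll trial points are $\geq x_k - 5\alpha_k > -\frac54$, $\geq x_k - 2 > -2$ and $\geq x_k - \alpha_k > -\frac14$ respectively, so the first index $T \in \N\cup\{+\infty\}$ with $x_T \leq 0$ satisfies $x_T \in {]}-2,0]$, hence $f(x_T) = g(x_T) \leq -1$; by monotonicity $f(x_k) \leq -1$ and thus $x_k \in [-2,0]$ for all $k \geq T$, every later move of the incumbent then using a point where $f = g$.

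For the escape, i.e.\ $\mathbb{P}(T < \infty) = 1$, I would fix $J := [-\frac14,-\frac18] \subset {]}-2,0[$, on which $f = g \leq -1 < 0$. On $\{T > k\}$ the incumbent $x_k$ lies on the positive branch with $f(x_k) \leq f(x_0) = p(\frac54)$; since $p'(x) = -24(x-1)(x-\frac54)(x-2)$, $p$ is increasing on ${]}\frac54,2[$, and since $f \geq 2p(\frac54) > p(\frac54)$ on $[2,\infty)$, this forces $x_k \in {]}0,\frac74[$, hence $J \subseteq \B_2(x_k)$. Therefore the \revpoll point at iteration $k$ lies in $J$ with conditional probability $|J|/(2R) = \frac1{32}$; at an even iteration the \search step produces nothing, so this event makes the iteration successful with $x_{k+1} \in J \subset {]}-2,0[$ and forces $T \leq k+1$. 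As the \revpoll draws are independent, $\mathbb{P}(T > 2q+1) \leq \frac{31}{32}\,\mathbb{P}(T > 2q)$ for all $q$, so $\mathbb{P}(T = \infty) = 0$.

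For $k \geq T$ the algorithm is a direct search on the smooth strictly convex $g$ with iterates confined to $[-2,0]$. Granting that it generates a refining subsequence, i.e.\ $\alpha_k \to 0$ (discussed below), set $K := \{k \geq T : \text{iteration } k \text{ is unsuccessful}\}$; the failing \poll gives $g(x_k + \alpha_k) \geq g(x_k)$ and $g(x_k - \alpha_k) \geq g(x_k)$, which after squaring reads $|x_k + 1| \leq \alpha_k/2 \to 0$, so the bounded sequence $(x_k)_{k \in K}$ converges to $x_* = -1$; and since every nonzero real direction normalises to $\pm 1$ while $\{-1,1\} = D_k \subseteq \D_k$ for all $k \in K$, the refining set of directions is $\D = \{-1,1\}$. (Once a refining subsequence is known to exist, the identification $x_* = -1$ can also be obtained from Theorem~\ref{th:main_result}: \cite[Assumption~2.1]{ViCu2012} holds by Proposition~\ref{prop:behavior_f}, Assumption~\ref{hyp:closure_branches} holds with $B = {]}-2,2[$, $B_1 = {]}-2,0]$, $B_2 = {]}0,2[$ and $\cl(B_i) = \cl(\int(B_i))$, and $f$ is lower semicontinuous on $B$; since $f(x_T)\leq -1$ while a \poll step from $0$ or $-2$ finds a point with $g < -1$, one has $\inf_{k\geq T} f(x_k) < -1$, so the refined point avoids $\partial B_1 = \{0,-2\}$ and lies in $\int(B_1)$, and Theorem~\ref{th:main_result}(1) with the smoothness of $g$ forces $g'(x_*) = 0$.)

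The main obstacle is the step just used, namely that the algorithm actually produces a refining subsequence ($\alpha_k \to 0$). With simple decrease and \search/\revpoll steps returning arbitrary real trial points there is no mesh structure, so the classical argument for $\liminf\alpha_k = 0$ does not apply verbatim; one either relies on the properties Algorithm~\ref{algo:new_mads} inherits from \cite[Algorithm~2.1]{ViCu2012} under \cite[Assumption~2.1]{ViCu2012}, or argues directly: if only finitely many iterations failed, then eventually $\alpha_k \equiv \bar\alpha > 0$, every iteration succeeds, the per-step decreases of the bounded nonincreasing sequence $(f(x_k))_{k\geq T}$ vanish, and the incumbents accumulate either at $-1$ — where, once $|x_k+1| \leq \bar\alpha/2$, the \poll and \search steps fail and only a \revpoll point in an interval of length $2|x_k+1| \leq \bar\alpha$ could sustain success, with conditional probability $\leq \bar\alpha/4 \leq \frac1{16}$, impossible for all large $k$ by a conditional Borel--Cantelli argument — or at a symmetric pair $\{-1\pm\rho_\infty\}$, $\rho_\infty > 0$, where \poll successes shift $|x_k+1|$ by the fixed amount $\bar\alpha$ and \revpoll successes scatter the incumbent, contradicting $|x_k+1| \to \rho_\infty$. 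Making this bookkeeping clean is the delicate part; the escape and the identification of $x_*$ and $\D$ are otherwise routine.
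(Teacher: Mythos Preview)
Your two-stage approach---almost-sure escape to the nonpositive branch via the \revpoll step, then convergence to the unique stationary point $-1$---is exactly the paper's strategy, and your argument is correct. The paper uses the trapping set $I = [-\sqrt{2}-1,0] = \{f \le 0\}$ rather than your $[-2,0] = \{g \le -1\}$, and targets the whole interval $[x_k-2,0]$ with probability $(2-x_k)/4 > 3/16$ instead of your fixed $J = [-\tfrac14,-\tfrac18]$, but these are cosmetic choices.

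Where the two differ in substance is rigor after the escape. The paper dispatches the second stage in one line (``necessarily converges to $x_* = -1$ since it is the only point satisfying some optimality conditions on $I$''), implicitly invoking the fact that Algorithm~\ref{algo:new_mads} inherits the refining-subsequence existence from~\cite[Algorithm~2.1]{ViCu2012} under~\cite[Assumption~2.1]{ViCu2012} (the \revpoll step being treated as a \search step). You instead give the direct computation $|x_k+1|\le\alpha_k/2$ at unsuccessful iterations, and you correctly flag that $\liminf\alpha_k=0$ is not automatic under simple decrease without a mesh. Your Borel--Cantelli sketch for this point has the right idea but, as you note, the bookkeeping (splitting successes by step type, handling the random $|x_k+1|$) would need to be made precise; the paper simply does not engage with this and leans on the inherited theory. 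Either route closes the argument.
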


\begin{proof}
	Denote $I = [-\sqrt{2}-1,0]$ and note that $f(x) \leq 0$ if and only if $x \in I$.
	If there exists $k_0 \in \N$ such that $x_{k_0} \in I$, then all incumbent solutions $x_k$ with $k \geq k_0$ belong to $I$ as well, and thus Algorithm~\ref{algo:new_mads} necessarily converges to $x_* = -1$ since it is the only point satisfying some optimality conditions on $I$.
	Hence it suffices to prove that the index $k_0$ almost surely exists.
	
	At any even iteration $k \in 2\N$, the \revpoll step is executed and the incumbent solution satisfies~$x_k \leq x_0 = \frac{5}{4}$.
	Then, if $x_k > 0$, the \revpoll step has a probability $\frac{2-x_k}{4} > \frac{2-x_0}{4} = \frac{3}{16}$ to generate a trial point~$t_k \in [x_k-2,0] \subset [-2,0] \subset I$ which would imply that $x_{k+1} = t_k \in I$.
	Thus there almost surely exists~$k_0 \in \N$ such that~$x_k \in I$, which concludes the proof.
\end{proof}

%% %%%%%%%%%%%%%%%%%%%%%%%%%%%%%%%%%%%%%%%%%%%%%%%%%
\section{Discussion}
\label{sec:Discussion}
%% %%%%%%%%%%%%%%%%%%%%%%%%%%%%%%%%%%%%%%%%%%%%%%%%%

This work proposes a counterexample to a result announced in~\cite{ViCu2012} on direct search methods applied to discontinuous functions.
It shows in the one-dimensional case that a \ddsm may converge to a point of discontinuity without noticing it, since it evaluates points in only one of the branches of the function near the refined point.
This counterexample may be generalized to any dimension.
The paper also identifies the incorrect step in the proof and proposes an algorithmic modification that recovers the validity of the results presented in~\cite{ViCu2012}.
Future works could improve this modified algorithm in order to save some evaluations of the function.
Another perspective for future works could consist in analyzing additional hypotheses on the objective function and on the domain to preserve the validity of the results without modifying the algorithm.

%% %%%%%%%%%%%%%%%%%%%%%%%%%%%%%%%%%%%%%%%%%%%%%%%%%
\section*{Conflict of interest}
The authors declare that they have no conflict of interest.

%% %%%%%%%%%%%%%%%%%%%%%%%%%%%%%%%%%%%%%%%%%%%%%%%%%
\bibliography{bibliography.bib}

\begin{thebibliography}{1}

\bibitem{AlAuGhKoLed2020}
S.~Alarie, C.~Audet, A.E. Gheribi, M.~Kokkolaras, and S.~{Le~Digabel}.
\newblock {Two decades of blackbox optimization applications}.
\newblock {\em EURO Journal on Computational Optimization}, 9:100011, 2021.

\bibitem{AuBaKo22}
C.~Audet, A.~Batailly, and S.~Kojtych.
\newblock Escaping unknown discontinuous regions in blackbox optimization.
\newblock {\em SIAM Journal on Optimization}, 32(3):1843--1870, 2022.

\bibitem{AuHa2017}
C.~Audet and W.~Hare.
\newblock {\em {Derivative-Free and Blackbox Optimization}}.
\newblock Springer Series in Operations Research and Financial Engineering.
  Springer, Cham, Switzerland, 2017.

\bibitem{CoScVibook}
A.R. Conn, K.~Scheinberg, and L.N. Vicente.
\newblock {\em {Introduction to Derivative-Free Optimization}}.
\newblock MOS-SIAM Series on Optimization. SIAM, Philadelphia, 2009.

\bibitem{ViCu2012}
L.N. Vicente and A.L. Cust\'odio.
\newblock Analysis of direct searches for discontinuous functions.
\newblock {\em Mathematical Programming}, 133(1-2):299--325, 2012.

\end{thebibliography}

\end{document}